 \newtheorem{thm}{Theorem}
 \newtheorem{prop}[thm]{Proposition}
 \newtheorem{lem}[thm]{Lemma}
 \theoremstyle{definition}
 \newtheorem{defin}{Definition}
\newcommand{\Aut}{\text{Aut}}
\begin{document}

\title[Normality of 3K]%
{Normality of the Kimura 3-parameter model }

\author[M.~Vodi\v{c}ka]%
{Martin Vodi\v{c}ka}

\address{%
{\sl Martin Vodi\v{c}ka}
\newline\indent
{\sl Max Planck Institute for Mathematics in Sciences, 
}
\newline\indent
{\sl Inselstrasse 22,}
\newline\indent
{\sl 041\,03~Leipzig}
\newline\indent
{\sl Germany}
\newline\indent
{\rm vodicka@mis.mpg.de}}

\begin{abstract}
The Kimura 3-parameter model is one of the most fundamental phylogenetic models in algebraic statistics. We prove that all algebraic varieties associated to this model are projectively normal, confirming a conjecture of Micha{\l}ek. 
\end{abstract}

\maketitle
\section{Introduction}
Phylogenetics is a science that models evolution. One of the central objects in phylogenetics is the \emph{tree model}. In general, a statistical model is a parametric family of probability distributions. The tree model is based on rooted tree and finite set $B$ and gives us probability distribution on $B^l$ where $l$ is the number of leaves of the tree. The parameters are distribution on the root and transition matrices along the edges of the tree. A group-based model is a tree model where the set $B$ is a group which acts on itself and parameters are $G$-invariant.

Since everything is finite a distribution allowed by a tree model may be represented as a vector $(p_1,\dots,p_n)$ where $p_i$'s are nonnegative and sum to one. Thus a tree model may be regarded as a map from the parameter space to the $n$-dimensional vector space.

In algebraic phylogenetics we are interested in the geometric locus of all probability distributions allowed by a given model. Precisely, the Zariski closure of this locus is an algebraic variety and one is interested in its geometric and algebraic properties \cite{4aut, SullivantAlgStat}. 

For example one asks for polynomials defining the variety---so-called phylogenetic invariants---or properties of the singular set. In this article, we investigate the latter property, namely we show that for a well-known 3-Kimura model \cite{kimura1981estimation}, the singularities are always \emph{normal}. This confirms a conjecture of Micha{\l}ek \cite[Conjecture 9.5]{JaJCTA}, \cite[Conjecture 12.1]{Diss}. 

The $3$-parameter Kimura model is a group-based model given by the group $\mathbb{Z}_2\times\mathbb{Z}_2$. Group-based models in general and the $3$-Kimura model in particular have been recently intensively studied within algebraic statistics \cite{SS, BW, mauhar2017h,DE, DK, JaJalg, JaAdvGeom, michalek2017phylogenetic, casanellas2008geometry, casanellas2015low, casanellas2017local, donten2016phylogenetic, michalek2017finite}.

Apart from the fact that it was an open conjecture, there are several important reasons to study normality of the $3$-Kimura model. 

\begin{itemize}
\item Group-based models allow a monomial parametrization \cite{SS}. Thus, one may say that they are \emph{toric varieties}. However, in pure mathematics one often requires a toric variety to be \emph{normal} \cite{Ful}. The reason is that in such a case the variety admits a nice combinatorial description in terms of a fan \cite{CLS}. Our result in particular implies that the normal fan of the polytope associated to the $3$-Kimura model describes the toric variety representing the model. 
\item Not all group-based models give rise to normal toric varieties: for example for the group $\mathbb{Z}_6$ one obtains a nonnormal variety \cite{DBM}. The normality also fails for the $2$-Kimura model. Thus the $3$-Kimura model is distinguished with respect to that regard.
\item Normality played an important role in the $\mathbb{Z}_2$ group-based model \cite{SX, BW, SS}.
\item Normality of toric varieties provides automatic bounds on degrees of phylogenetic invariants \cite{Stks}. In a special case of a tree with six leaves this was used in a recent proof \cite{michalek2017phylogenetic} of the Sturmfels-Sullivant conjecture \cite[Conjecture 30]{SS}. On that example normality was checked by computer using software Normaliz \cite{Normaliz}. Our proof, in particular, confirms normality in this case without the necessity to rely on computer software. 
\end{itemize}

It would not be possible to obtain our theorem without many great previous results. We list the most important below.
\begin{itemize}
\item \emph{Application of \textbf{D}iscrete \textbf{F}ourier \textbf{T}ransform to unravel toric structure} The DFT may be considered as a clever change of coordinates, that changes the parametrization of the phylogenetic model into one given by \emph{monomials}. First such applications were made by Handy and Penny \cite{hendy1989framework}. The toric structure was studied in detail in the work of Sturmfels and Sullivant \cite{SS} and Micha{\l}ek \cite{Diss}.
\item \emph{Reduction to claw trees} Recall that a claw tree is a tree with just one inner vertex.
It is known that one can extend many properties that hold for claw trees to arbitrary trees. This technique is well-developped to obtain phylogenetic invariants \cite{DK}. Further, it is known that \emph{normality} in case of claw trees implies normality for arbitrary trees. For phylogenetic group-based models it was first observed in \cite[Lemma 5.1]{JaJalg}. The joining of trees is a special case of a more general construction of toric fiber products \cite{Sethtfp, rauh2016lifting, engstrom2014multigraded}.
\item \emph{Facet description} The vertex description of the polytopes representing group-based models are well-known \cite{SS, JaJalg, BW}. However, obtaining facet description from the vertex one is hard in the general case, and for phylogenetic models in particular. For the 3-Kimura model such a description was provided in \cite{mauhar2017h}.
\end{itemize}
First two results of the above allow us to translate the question about projective normality of the variety associated to the 3-Kimura model into a purely combinatorial statement about normality of a family of polytopes. We prove the normality using only combinatorial methods. Strong tool is the facet description of the polytope because it allows us to prove that a point lies inside of the polytope by checking inequalities.

\section{The polytope of the 3-Kimura model}
We start by fixing notation.

Let $G$ be the group $\mathbb Z_2\times \mathbb{Z}_2$. Let us denote its elements by $0,\alpha,\beta,\gamma$. We also denote the elements of $\mathbb Z_2$ by $\mathbf{0},\mathbf{1}$.

Let $H_n$ be the set of the group-based flows of length $n$ of $G$, i.e. $$H_n=\{(g_1,\dots,g_n)\in G:g_1+g_2+\dots+g_n=0\}.$$
It is easy to see that $H_n$ is a subgroup of $G^n$.

The goal of this article is to prove normality of a family of polytopes for 3-Kimura model $P_n\subset\mathbb R^{4n}$ indexed by $n\in\mathbb N$. Before we formally define them, we introduce further notation.

We denote the coordinates of a point $x\in \mathbb R^{4n}$ by $x_g^j$ where $g\in G$ and $1\le j\le n$. Although we are using upper indices, there will be no ambiguity since we will not use any powers in this article.

\begin{defin}
We say that \textit{the $G$-presentation} of a point $x\in \mathbb Z_{\ge 0}^{4n}$ is an $n$-tuple $(G_1,\dots,G_n)$ of multisets of elements of $G$ such that the element $g\in G$ appears exactly $x_g^j$ times in the multiset $G_j$. We may identify the $n$-tuple $(g_1,\dots,g_n)\in G^n$ with the $n$-tuple of multisets $(\{g_1\},\dots,\{g_n\})$.
\end{defin}

\begin{defin} The vertices of $P_n$ are all points of $\mathbb R^{4n}$ which $G$-presentations are the $n$-tuples from $H_n$. Therefore, $P_n$ is a convex hull of these points. 
\end{defin}

Equivalent characterization of $P_n$ is given in \cite{mauhar2017h}. The polytope is defined by the following inequalities: 
\begin{itemize}
\item $x_g^j\ge 0$ for all $g\in G,1\le j\le n$,
\item $x_0^j+x_\alpha^j+x_\beta^j+x_\gamma^j=1, \text{ for all } 1\le j\le n$,
\item For all $A\subseteq\{1,2,\dots,n\}$ with $|A|$ being an odd number:
$$\sum_{j\in A} (x_0^j+x_\alpha^j)+\sum_{j\not\in A} (x_\beta^j+x_\gamma^j)\ge 1,$$
$$\sum_{j\in A} (x_0^j+x_\beta^j)+\sum_{j\not\in A} (x_\alpha^j+x_\gamma^j)\ge 1,$$
$$\sum_{j\in A} (x_0^j+x_\gamma^j)+\sum_{j\not\in A} (x_\alpha^j+x_\beta^j)\ge 1.$$
\end{itemize}

We denote the left sides of the last three inequalities by $S_\alpha(x,A), S_\beta(x,A), S_\gamma(x,A)$ respectively. Each inequality gives us a facet of $P_n$. We define $$F_g(A)=\{x\in P_n: S_g(x,A)=1\}.$$ 

The lattice generated by vertices of $P_n$ is $$L_n=\{m\in \mathbb Z^{4n}|\ \forall 1\le j,j'\le n: \sum_{g\in G} m_g^j=\sum_{g\in G} m_g^{j'}, \sum_{\substack {g\in G\\1\le j\le n}} (m_g^jg)=0\},$$

where the last sum is in $G$. Alternatively, we can characterize $G$-presentations of points in $L_n\cap \mathbb Z_{\ge 0}^n$ as follows: Every multiset has the same size and sum of all elements in multisets is $0$.

\begin{defin} Let $v(0)$ be the vertex corresponding to the $n$-tuple $(0,\dots,0)$ and $v(g)_{j,j'}$ be the vertex corresponding to the $n$-tuple which has on $j$-th and $j'$-th place $g$ and all other places 0.

Let $V_n$ be the following set of vertices of $P_n$:
$$V_n=\{v(0)\}\cup\{v(g)_{j,n}|1\le j\le n-1,g\in\{\alpha,\beta,\gamma\}\}.$$
\end{defin}

\vspace{0.2 cm}
Our goal is to prove that $P_n$ is normal for every positive integer $n$. Let us recall that polytope $P_n$ is \emph{normal} if every point in $kP_n\cap L_n$ can be written as a sum of $k$ lattice points from $P_n$. Normality of polytope is equivalent to the fact that the associated projective toric variety is projectively normal.

It is easy to check that $P_1$, $P_2$ and $P_3$ are normal. Hence, in this article we consider only $n\ge 4$.

\section{symmetries of $P_n$}

Polytope $P_n$ has a lot of symmetries that can be described by group actions on $\mathbb R^{4n}$:

\begin{itemize}
\item Action of $\mathbb S_n$:

For $\sigma\in \mathbb S_n$ and $x\in\mathbb R^{4n}$ we define $\sigma(x)_g^{\sigma(j)}=x_g^j$. Intuitively, we only permute quadruples of coordinates by the upper index.

\item Action of $H_n$: 

For $h=(g_1,\dots,g_n)\in H_n$ and $x\in\mathbb R^{4n}$ and we define $(hx)_g^j=x_{(g+g_j)}^j$.
Intuitively, if we look at $G$-presentation of a point in $\mathbb Z^{4n}$ we add $g_j$ to elements in $G_j$.

\item Action of $\Aut (G)$:

For $\varphi\in \Aut (G)$ and $x\in\mathbb R^{4n}$ we define $\varphi(x)_{\varphi(g)}^j=x_g^j$. Again, if we consider $G$-presentation of $x$ this is application of the automorphism $\varphi$ to elements in multisets.
\end{itemize} 

All of these actions only permute coordinates in $\mathbb R^{4n}$ and therefore are automorphisms of $\mathbb R^{4n}$ as a vector space. It can be easily verified that they map vertices of $P_n$ to vertices of $P_n$ and therefore preserve $P_n$. It follows that these actions restricted to $L_n$ are automorphisms of this lattice.

We want to prove that every point $x\in kP_n\cap L$ decomposes to a sum of $k$ lattice points from $P_n$. It is enough to prove it for an image of $x$ under any of group actions described above, since $Ax=v_1+\dots+ v_k$ implies $x=A^{-1}v_1+\dots+A^{-1}v_k$ for any group action $A$.

\vspace{0.3cm}
Let us define linear ordering on multisets of four real numbers with sum equal to $k$. Consider two multisets $\{a,b,c,d\}$ and $\{a',b',c',d'\}$. Without loss of generality we may assume $a\ge b\ge c\ge d$ and $a'\ge b'\ge c'\ge d'$. We say $$\{a,b,c,d\}\succ \{a',b',c',d'\}\Leftrightarrow (a>a') \vee (a=a'\wedge b>b')\vee (a=a'\wedge b=b'\wedge c>c').$$

Consider $x\in kP\cap L$. If we order multisets $\{x_0^j,x_\alpha^j,x_\beta^j,x_\gamma^j\}$ then by acting with corresponding permutation from $\mathbb S_n$ we can ensure that multiset for $j=n$ is the smallest one in this ordering.

Let us denote $g_j$ the most frequent element (or one of the most frequent elements) in $j$-th multiset from $G$-presentation of $x$, i.e. $x_{g_j}^j=\max\{x_0^j,x_\alpha^j,x_\beta^j,x_\gamma^j\}$. Then by acting with $(g_1,g_2,\dots,g_{n-1},g_1+\dots+g_{n-1})\in H_n$ we obtain a point $x$ in which the element $0$ is the most frequent in all multisets except the last one.

This means that if we need to, for a point $x\in kP_n\cap L$ we may without loss of generality assume the following two facts:

\begin{equation}
\forall j\in\{1,2,\dots,n-1\}:\ \{x_0^j,x_\alpha^j,x_\beta^j,x_\gamma^j\}\succeq\{x_0^n,x_\alpha^n,x_\beta^n,x_\gamma^n\}
\end{equation}
\begin{equation}
\forall j\in\{1,2,\dots,n-1\}:\ x_0^j=\max\{x_0^j,x_\alpha^j,x_\beta^j,x_\gamma^j\}
\end{equation}

We add another definition:

\begin{defin}
Let $x\in kP_n\cap L_n$. The vertex $v$ of $P_n$ is called \emph{$x$-good} if all coordinates of the point $x-v$ are non-negative. 
\end{defin}

\section{Preliminary results}
\begin{lem}\label{1}
Let $x\in kP_n\cap L_n$ and $j\in\{1,2,\dots,n\}$. Suppose that $x_0^j\ge x_\alpha^j,x_\beta^j,x_\gamma^j$ and let $g\in\{\alpha,\beta,\gamma\}$. Then $x_0^j+x_g^j\ge \lceil k/3\rceil$ and the equality holds if and only if $x_g^j=0$ and $x_h^j=k/3$ for $h\neq g$.
\end{lem}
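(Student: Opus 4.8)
The plan is to treat this as a purely \emph{local} statement about the four coordinates in the $j$-th block: the global facet inequalities $S_g(x,A)\ge k$ and the lattice condition $\sum_{g,j}x_g^jg=0$ play no role, and only the block-sum identity together with integrality are needed. First I would record the two facts we may use. Since $x\in kP_n$, dilating the equality constraint $x_0^j+x_\alpha^j+x_\beta^j+x_\gamma^j=1$ gives the block-sum $x_0^j+x_\alpha^j+x_\beta^j+x_\gamma^j=k$; and since $x\in L_n\subseteq\mathbb Z^{4n}$, every coordinate, in particular $x_0^j+x_g^j$, is a nonnegative integer. Now fix $g\in\{\alpha,\beta,\gamma\}$ and let $h_1,h_2$ be the two elements of $\{\alpha,\beta,\gamma\}\setminus\{g\}$. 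The maximality hypothesis $x_0^j\ge x_{h_1}^j,x_{h_2}^j$ yields $x_{h_1}^j+x_{h_2}^j\le 2x_0^j\le 2(x_0^j+x_g^j)$. Substituting this into the block-sum, rewritten as $k-(x_0^j+x_g^j)=x_{h_1}^j+x_{h_2}^j$, produces the single inequality $k\le 3(x_0^j+x_g^j)$, i.e. $x_0^j+x_g^j\ge k/3$. As the left-hand side is an integer, this sharpens immediately to $x_0^j+x_g^j\ge\lceil k/3\rceil$, the asserted bound.

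For the equality clause I would trace back which of the two inequalities in the chain above must be tight. Tightness of the real bound $x_0^j+x_g^j=k/3$ forces $x_{h_1}^j+x_{h_2}^j=2x_0^j$, hence $x_{h_1}^j=x_{h_2}^j=x_0^j$ since each summand is at most $x_0^j$, and simultaneously $x_0^j=x_0^j+x_g^j$, i.e. $x_g^j=0$. Feeding these back into the block-sum gives $3x_0^j=k$, so $x_0^j=x_{h_1}^j=x_{h_2}^j=k/3$ and $x_g^j=0$, which is exactly the configuration claimed (and it forces $3\mid k$). Conversely, this configuration plainly realizes $x_0^j+x_g^j=k/3=\lceil k/3\rceil$.

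I do not expect a genuine obstacle, since the argument is elementary; the only point requiring care is the passage between the rational bound $k/3$ and its integer ceiling $\lceil k/3\rceil$. In writing the equality clause I would therefore be careful to pin the structural characterization to the tight case $\lceil k/3\rceil=k/3$, so that the ``only if'' direction is read as the identification of the configurations saturating the underlying chain $k\le 3(x_0^j+x_g^j)$.
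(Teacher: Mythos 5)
Your proof is correct and takes essentially the same route as the paper, which compresses the identical estimate into the single chain $3(x_0^j+x_g^j)\ge 3x_0^j+x_g^j\ge x_0^j+x_\alpha^j+x_\beta^j+x_\gamma^j=k$ followed by integrality, and dismisses the equality clause as obvious. Your extra care with the ceiling is in fact warranted: for $3\nmid k$ the literal ``only if'' can fail (e.g.\ a block $(1,1,1,1)$ with $k=4$ attains $x_0^j+x_g^j=2=\lceil 4/3\rceil$), and the paper implicitly invokes the equality clause only in situations where the whole chain is tight, i.e.\ $\lceil k/3\rceil=k/3$, which is exactly the reading you pin down.
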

\begin{proof}
$$3(x_0^j+x_g^j)\ge 3x_0^j+x_g^j\ge x_0^j+x_\alpha^j+x_\beta^j+x_\gamma^j=k.$$
We divide by 3 and realise that $x_g^j$ are integers to obtain wanted inequality. The part about the equality is obvious.
\end{proof}

\begin{lem}\label{2}
Let $x\in L_n$ be a point such that $x_0^j+x_\alpha^j+x_\beta^j+x_\gamma^j=k$ for all $1\le j\le n$. Let $g\in\{\alpha,\beta,\gamma\}$ and $A\subseteq\{1,2,\dots,n\}$ be a set of odd cardinality. Then $$S_g(x,A)\equiv k\pmod 2.$$
\end{lem}
\begin{proof}
We consider only the case $g=\alpha$, other cases are analogous.

Consider the homomorphism $$\varphi: \mathbb Z_2\times \mathbb Z_2\rightarrow \mathbb Z_2$$ $$0,\alpha\mapsto \mathbf 0,\ \beta,\gamma\mapsto \mathbf 1.$$

For $x\in L_n$ we get $$\mathbf 0=\varphi(0)=\varphi\left(\sum_{\substack {g\in G\\1\le j\le n}} (x_g^jg)\right)=\sum_{\substack {g\in G\\1\le j\le n}}(x_g^j\varphi(g))=\sum_{j=1}^n(x_\beta^j+x_\gamma^j)\cdot \mathbf 1.$$
Therefore $\sum_{j=1}^n(x_\beta^j+x_\gamma^j)$ must be even. This implies
$$\sum_{j\in A} (x_0^j+x_\alpha^j)+\sum_{j\not\in A} (x_\beta^j+x_\gamma^j)=\sum_{j=1}^n(x_\beta^j+x_\gamma^j)+\sum_{j\in A}(x_0^j+x_\alpha^j-x_\beta^j-x_\gamma^j)=$$
$$=\sum_{j=1}^n(x_\beta^j+x_\gamma^j)-\sum_{j\in A}(x_0^j+x_\alpha^j+x_\beta^j+x_\gamma^j)+2\sum_{j\in A}(x_0^j+x_\alpha^j)=$$
$$\sum_{j=1}^n(x_\beta^j+x_\gamma^j)-k|A|+2\sum_{j\in A}(x_0^j+x_\alpha^j)\equiv k|A|\equiv k\pmod 2.$$ 
\end{proof}

The following lemma implies that it is sufficient to consider only such points $x$ for which the following condition holds:

\begin{equation}
\forall j\in\{1,2,\dots,n\},g\in G: x_g^j<k
\end{equation} 

\begin{lem}\label{3}
Suppose that for every positive integers $k,m$ and every $x\in kP_m\cap L_m$ such that $x_g^j<k$ for all $g,j$ we can write $x$ as a sum of $k$ vertices of $P_m$. Then $P_n$ is normal for every positive integer $n$.
\end{lem}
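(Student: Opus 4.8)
The plan is to prove that $P_n$ is normal by induction on $n$, invoking the hypothesis precisely to dispose of those points all of whose coordinates are below $k$. The cases $n\le 3$ serve as the base, since $P_1,P_2,P_3$ are already normal. For $n\ge 4$ take $x\in kP_n\cap L_n$. If $x_g^j<k$ for every $g$ and $j$, the hypothesis immediately writes $x$ as a sum of $k$ vertices and we are done. Since each quadruple $x_0^j+x_\alpha^j+x_\beta^j+x_\gamma^j$ equals $k$ with nonnegative integer entries, the only alternative is that some coordinate equals $k$; then that position $j$ is \emph{pure}, meaning $x_g^j=k$ for one $g$ while the other three coordinates at $j$ vanish.

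The real work is the pure case, and first I would normalize it using the symmetries from Section 3. A permutation in $\mathbb S_n$ moves the pure position to $j=n$, and a suitable $h\in H_n$ (take $h_n$ equal to the pure value and cancel it against one other entry) turns the pure value into $0$, so that after normalization $x_0^n=k$ and $x_\alpha^n=x_\beta^n=x_\gamma^n=0$. As recalled earlier, decomposability is preserved under these group actions, so it suffices to decompose the normalized point. I then delete the last position and set $\tilde x=(x_g^j)_{g\in G,\ 1\le j\le n-1}\in\mathbb R^{4(n-1)}$.

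The crux is to verify $\tilde x\in kP_{n-1}\cap L_{n-1}$. The quadruple sums $\tilde x_0^j+\tilde x_\alpha^j+\tilde x_\beta^j+\tilde x_\gamma^j=k$ are inherited directly. For the lattice condition, the pure position contributes $x_0^n\cdot 0=0$ to the group sum $\sum_{g,j}x_g^jg$, so the sum over the remaining positions is still $0$; this is exactly where normalizing the pure value to $0$ is essential, since a nonzero pure value would spoil the group-sum condition whenever $k$ is odd. For the facet inequalities, fix an odd $A\subseteq\{1,\dots,n-1\}$ and regard it as a subset of $\{1,\dots,n\}$ with $n\notin A$: in each of $S_\alpha(x,A)$, $S_\beta(x,A)$, $S_\gamma(x,A)$ the only contribution of position $n$ comes through two of the coordinates $x_\alpha^n,x_\beta^n,x_\gamma^n$, all of which vanish, whence $S_g(\tilde x,A)=S_g(x,A)\ge k$. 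Therefore $\tilde x\in kP_{n-1}\cap L_{n-1}$.

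Finally I would apply the inductive hypothesis: $P_{n-1}$ is normal, so $\tilde x=\tilde v_1+\dots+\tilde v_k$ with each $\tilde v_i$ a vertex of $P_{n-1}$, i.e.\ an $(n-1)$-tuple in $H_{n-1}$. Appending a $0$ in position $n$ to each $\tilde v_i$ yields an $n$-tuple in $H_n$, hence a vertex $v_i$ of $P_n$, and since the appended $0$'s contribute $k=x_0^n$ in the relevant slot we obtain $x=v_1+\dots+v_k$; undoing the normalizing symmetries then decomposes the original $x$, completing the induction. I expect the only genuinely delicate point to be the facet verification above, together with the realization that the pure value must be normalized to $0$ before deletion; once these are in place, the lifting of the vertices is routine.
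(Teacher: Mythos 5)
Your proposal is correct and follows the same overall strategy as the paper's proof: induction on $n$ with base cases $P_1,P_2,P_3$, the hypothesis disposing of all points with every coordinate below $k$, normalization of the pure position to $x_0^n=k$ via a permutation and a suitable element of $H_n$ (the paper uses $(g,0,\dots,0,g)$, which is exactly your ``cancel it against one other entry''), projection onto the first $4(n-1)$ coordinates, and lifting the vertices back by appending $0$ in position $n$. The one step you carry out differently is the verification that the projected point $\tilde x$ lies in $kP_{n-1}$: you check the facet inequalities of \cite{mauhar2017h} directly, observing that a position pure at $0$ contributes nothing to any $S_g(x,A)$ with $n\notin A$, so $S_g(\tilde x,A)=S_g(x,A)\ge k$. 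The paper instead argues from the vertex description: writing $x=\lambda_1v_1+\dots+\lambda_sv_s$ with $\lambda_i>0$ summing to $k$, the constraints $(v_i)_0^n\le 1$ and $x_0^n=k$ force $(v_i)_0^n=1$ for every $i$, so each $\pi(v_i)$ is a vertex of $P_{n-1}$ and $\pi(x)\in kP_{n-1}$ with no inequality-checking at all. Both arguments are sound; the paper's has the small advantage of keeping this lemma independent of the facet description of \cite{mauhar2017h}, while yours is a routine computation once that description is granted (as it is elsewhere in the paper). Your observation that the pure value must be normalized to $0$ before deleting the last position --- since a nonzero pure value would spoil the group-sum condition for odd $k$ --- is precisely the role of the paper's action by $(g,0,\dots,0,g)$, so that delicate point is handled identically in both proofs.
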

\begin{proof}
Proof by induction on $n$. $P_1,\ P_2$ and $P_3$ are normal.

Suppose that $P_{n-1}$ is normal. We prove that also $P_n$ is normal. Consider a point $x\in kP_n\cap L_n$. If $x_g^j<k$ for all $g,j$ then $x$ decomposes by assumption. Therefore we may assume that $x_g^j=k$ for some $g,j$. By acting with suitable permutation we can assume that $j=n$ and then by acting with $(g,0,\dots,0,g)$ we obtain $g=0$.

Consider now the projection $\pi:\mathbb R^{4n}\rightarrow \mathbb R^{4(n-1)}$ on the first $4(n-1)$ coordinates. Since $x\in kP_n$ there exist positive real numbers $\lambda_1,\dots,\lambda_s$ with $\lambda_1+\dots+\lambda_s=k$ such that $\lambda_1v_1+\dots+\lambda_sv_s=x$, where $v_1,\dots,v_s$ are some vertices of $P_n$. But $(v_i)_0^n\le 1$ and $x_0^n=k$ implies $(v_i)_0^n=1$ for all $i$.

Consequently, $\pi(v_i)$ is a vertex of $P_{n-1}$ and $\pi(x)\in kP_{n-1}$. By induction hypothesis $\pi(x)$ decomposes to $\pi(x)=u_1+\dots+u_k$, where $u_i$ are vertices of $P_{n-1}$.

Now we simply put $u_i'\in \pi^{-1}(u_i)$ such that $(u_i')_0^n=1$ and $(u_i')_g^n=0$ for $g\neq 0$.
Obviously all $u_i'$ are vertices of $P_n$ and we have $x=u_1'+\dots+u_n'$.
\end{proof}

\begin{lem}\label{4}
Let $x\in 2P_n\cap L_n$. Then $x$ can be written in the form $x=v+v'$ where $v,v'$ are vertices of $P_n$.
\end{lem}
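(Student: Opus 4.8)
The plan is to reformulate the statement as a selection problem on the $G$-presentation and then to detect the unique possible obstruction through a single facet inequality.

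First I would unravel what $x=v+v'$ means at the level of $G$-presentations. Since $x\in 2P_n\cap L_n$, every multiset $G_j$ in the $G$-presentation of $x$ has exactly two elements, and the grand total of all elements across the $G_j$ vanishes in $G$. A vertex of $P_n$ is precisely a flow $(h_1,\dots,h_n)\in H_n$, i.e.\ a choice of one group element $h_j$ in each coordinate with $\sum_j h_j=0$. Hence writing $x=v+v'$ amounts to splitting each two-element multiset $G_j=\{c_j,d_j\}$ into the element taken by $v$ and the element taken by $v'$; if the $v$-part sums to $0$, then, because the grand total is $0$ and every element of $G$ is its own inverse, the $v'$-part automatically sums to $0$ as well. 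Thus $v$ is an $x$-good vertex and $v'=x-v$ is the complementary vertex. So the lemma reduces to the assertion: one can pick $h_j\in G_j$ for every $j$ so that $\sum_j h_j=0$.

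Next I would analyse which sums $\sum_j h_j$ are attainable. Fixing the baseline choice $h_j=c_j$ gives a sum $S_0=\sum_j c_j$, and flipping the choice at coordinate $j$ changes the sum by $c_j+d_j$. Since $G$ is an elementary abelian $2$-group, the set of attainable sums is the coset $S_0+D$, where $D$ is the subgroup of $G$ generated by the differences $c_j+d_j$. The selection exists if and only if $0\in S_0+D$, i.e.\ iff $S_0\in D$. For $g\in\{\alpha,\beta,\gamma\}$ let $\varphi_g\colon G\to\mathbb Z_2$ be the surjection with kernel $\{0,g\}$ (the case $g=\alpha$ is exactly the map $\varphi$ from the proof of Lemma~\ref{2}). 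If $D=G$ then $S_0\in D$ holds trivially; otherwise $D=\{0\}$ or $D=\{0,g\}$, and in either case $S_0\in D$ is equivalent to the conditions $\varphi_g(S_0)=\mathbf 0$ for every $g$ with $\{0,g\}\supseteq D$. For each such $g$ both elements of every $G_j$ lie in a common $\varphi_g$-coset, so $\varphi_g(G_j)$ is well defined and $\varphi_g(S_0)=\sum_j\varphi_g(G_j)$.

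The heart of the argument, and the step I expect to be the main obstacle, is to show that each of these parity conditions is forced by membership in $2P_n$. Fix $g$ with $\{0,g\}\supseteq D$ and suppose for contradiction that $\varphi_g(S_0)=\mathbf 1$. I would then take $A=\{\,j:G_j\subseteq G\setminus\{0,g\}\,\}$, the coordinates whose multiset lies in the nontrivial $\varphi_g$-coset; the hypothesis $\varphi_g(S_0)=\mathbf 1$ says exactly that $|A|$ is odd. A direct computation now gives $S_g(x,A)=0$: each $j\in A$ contributes $0$ to the first sum defining $S_g$, while each $j\notin A$, whose multiset lies in $\{0,g\}$, contributes $0$ to the second sum. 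This contradicts the facet inequality $S_g(x,A)\ge 2$, valid on $2P_n$ for $|A|$ odd. Hence $\varphi_g(S_0)=\mathbf 0$ for every relevant $g$, so $S_0\in D$, the required selection exists, and $x=v+v'$. The only delicate point is the coset bookkeeping in $G/D$: one must check that the parities controlled by the $\varphi_g$ with $\{0,g\}\supseteq D$ suffice to force $S_0\in D$, and that the selected set $A$ is a genuine odd-cardinality set so that the facet inequality applies.
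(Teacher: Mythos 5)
Your proposal is correct, and it takes a genuinely different route from the paper. The paper first normalizes $x$ via the symmetry actions (conditions $(1)$--$(3)$) and then argues by cases: if $x_0^n>0$ it subtracts $v(0)$, and if $x_0^n=0$ it uses the single facet inequality $S_\gamma(x,\{n\})\ge 2$ to locate a positive coordinate $x_g^j$ with $g\in\{\alpha,\beta\}$, $j<n$, and subtracts $v(g)_{j,n}$; in each case the complement is automatically a vertex exactly as in your opening reduction. You instead dispense with normalization entirely and solve the selection problem structurally: the attainable sums $\sum_j h_j$ form the coset $S_0+D$ with $D$ the subgroup generated by the differences $c_j+d_j$, and the only possible obstruction is a parity $\varphi_g(S_0)=\mathbf 1$ for some $g$ with $\ker\varphi_g\supseteq D$; your choice $A=\{j: G_j\subseteq G\setminus\{0,g\}\}$ is genuinely odd in that case (since $\varphi_g(S_0)=|A|\cdot\mathbf 1$, using that each $G_j$ lies in a single $\varphi_g$-coset because $c_j+d_j\in\ker\varphi_g$), and the computation $S_g(x,A)=0$ cleanly contradicts the facet inequality $S_g(x,A)\ge 2$, with the coset bookkeeping for $D=\{0\}$ versus $D=\{0,g\}$ checking out because every proper subgroup of $G$ is the intersection of the order-two subgroups containing it. What each approach buys: yours is normalization-free, identifies the \emph{exact} obstruction to a level-$2$ decomposition, and makes transparent that the odd-set facet inequalities are precisely what certify decomposability (it may use a large odd set $A$, whereas the paper only ever invokes a singleton); the paper's argument is shorter given the symmetry machinery it has already built, and it deliberately produces a vertex from the distinguished set $V_n$, in keeping with the global strategy used for all $k\ge 3$.
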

\begin{proof}
We assume, without loss of generality, $(1)$, $(2)$ and $(3)$. Thus $x_0^j>0$ for $1\le j\le n-1$. If also $x_0^n> 0$ then $x=v(0)+(x-v(0))$. Further, $x-v(0)$ must be a vertex of $P_n$ since it has non-negative coordinates and sum of elements in $G$-presentation of $x-v(0)$ is $0$ since it is $0$ for both $x$ and $v_0$. If $x_0^n=0$ then by acting with suitable $\varphi\in \Aut (G)$ we have $x_\alpha^n=x_\beta^n=1$ since $x_g^n<2$ for all $g$ by condition $(3)$.

Since $S_\gamma(x,\{n\})\ge 2$ at least one of the numbers $x_g^j$ for $g=\alpha,\beta$; $1\le j\le n-1$ is greater than 0. Then $x=v(g)_{j,n}+(x-v(g)_{j,n})$ for such $g,j$. By the same arguments as above $(x-v(g)_{j,n})$ must be a vertex of $P_n$. 
\end{proof}

\begin{lem}\label{6}
Let $x\in kP_n\cap L_n$ be such that there are at least three multisets $\{k/3,k/3,k/3,0\}$ in $G$-presentation of $x$. Then $x=y+v$, where $v$ is a vertex of $P_n$ and $y\in (k-1)P_n$.
\end{lem}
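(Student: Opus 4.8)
The plan is to peel off a single $x$-good vertex $v$ and show that $y:=x-v$ automatically lies in $(k-1)P_n$. Since $v$ is a vertex (hence a point of $L_n$) and $x\in L_n$, the difference $y$ lies in $L_n$, every multiset of $y$ has size $k-1$, and $y$ has nonnegative coordinates precisely because $v$ is $x$-good. Using the facet description, the only thing left to check is $S_g(y,A)=S_g(x,A)-S_g(v,A)\ge k-1$ for every $g\in\{\alpha,\beta,\gamma\}$ and every odd $A$. As $x\in kP_n$ gives $S_g(x,A)\ge k$ and a vertex always satisfies $S_g(v,A)\ge 1$, the dangerous inequalities are exactly those coming from facets with $S_g(x,A)$ close to $k$.

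To build $v=(g_1,\dots,g_n)$, I would fix three positions $j_1,j_2,j_3$ carrying the multiset $\{k/3,k/3,k/3,0\}$ and let $h_{j_i}$ be the unique element with $x_{h_{j_i}}^{j_i}=0$. At every other position I set $g_j$ to be a most frequent element, so that $x_{g_j}^j=\max\ge 1$ and $v$ is $x$-good there. At the three special positions I write $g_{j_i}=h_{j_i}+d_i$ with $d_i\in\{\alpha,\beta,\gamma\}$; this forces $g_{j_i}\ne h_{j_i}$, hence $x_{g_{j_i}}^{j_i}=k/3\ge 1$ and $x$-goodness there as well. The group-flow condition $\sum_j g_j=0$ reduces to a single equation $d_1+d_2+d_3=s'$ in $G$, and a short count shows it can always be solved with the extra requirement that $d_1,d_2,d_3$ are \emph{not all equal} (if $s'=0$ take $\alpha,\beta,\gamma$; if $s'\ne 0$ take $d_1=s'$ and $d_2=d_3$ any common nonzero element $\ne s'$).

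The heart of the argument is the facet verification, done position by position. Writing $S_g(x,A)-S_g(v,A)$ as a sum of local slacks, each slack is nonnegative by $x$-goodness, and a special position contributes at least $k/3-1$ (its \emph{active pair}, the two coordinates summed by $S_g$ there, always contains a coordinate of value $k/3$). Its slack drops to exactly $k/3-1$ only when that active pair equals $\{g_{j_i},h_{j_i}\}$; by the identity that a $2$-subset $\{a,b\}$ is the active pair of $S_{a+b}$ and of no other $S_g$, this worst case can occur only for the single value $g=d_i$. Since the $d_i$ are not all equal, for every fixed $g$ at least one special position escapes the worst case and contributes at least $k/3$, so the three special positions together contribute at least $k-2$, giving $S_g(x,A)-S_g(v,A)\ge k-2$ for all $g$ and all odd $A$.

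The last step removes the off-by-one: applying Lemma \ref{2} to both $x$ and $v$ gives $S_g(x,A)-S_g(v,A)\equiv k-1\pmod 2$, so a quantity that is $\ge k-2$ and has the parity of $k-1$ is in fact $\ge k-1$. This yields $S_g(y,A)\ge k-1$ for all $g$ and all odd $A$, placing $y$ in $(k-1)P_n$ and finishing the decomposition $x=y+v$. The main obstacle is exactly this gap: the naive slack estimate reaches only $k-3$, and closing it needs both the combinatorial freedom to keep the $d_i$ from coinciding (so that at least one special position is always safe) and the parity constraint of Lemma \ref{2} to gain the final unit.
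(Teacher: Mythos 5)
Your proposal is correct and follows essentially the same route as the paper: choose an $x$-good vertex supported in the multisets of the $G$-presentation, arrange the flow condition so that the entries at the three special positions are not all equal (your $d_i$-parametrization is just an explicit version of the paper's choice of $g_1,g_2$ after normalizing by the group actions), bound $S_g(x-v,A)\ge k-2$ using those three positions, and invoke the parity of Lemma~\ref{2} to upgrade to $k-1$. Your worst-case slack analysis via active pairs is in fact more detailed than the paper's one-line verification, but it is the same argument.
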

\begin{proof}
By acting with suitable permutation from $\mathbb S_n$ we may assume that these three multisets are the first three. Then by acting with suitable $(g_1,g_2,\dots,g_n)\in H_n$ we may assume $x_0^j=x_\alpha^j=x_\beta^j=k/3$ for $j=1,2,3$. We describe the $G$-presentation of $v$ (which is a $n$-tuple $(g_1,g_2,\dots,g_n)$ of elements from $G$). We pick the last $n-2$ elements arbitrarily, the only condition is that $g_j$ belongs to the $j$-th multiset from $G$-presentation of $x$. Then we pick $g_1$ and $g_2$ such that sum of this $n$-tuple is 0 and $g_1,g_2,g_3$ are not all equal. Since $g_1$ and $g_2$ can be any from $0,\alpha,\beta$, it is possible.  

Now we need to check that $x-v=y\in (k-1)P_n$. We only need to check the inequalities for sets $A$. However, if we try to compute $S_g(y,A)$ we always get at least $k-2$ already on the first three coordinates. Therefore, due to Lemma \ref{2} the inequalities hold.
\end{proof}

From now, we may assume that $x\in kP_n\cap L_n$ satisfies the following condition since the other case is covered by the previous lemma.

\begin{equation}
\text{At most two multisets from } G \text{-presentation of }x \text{ are }\{k/3,k/3,k/3,0\}.
\end{equation}

\begin{lem}\label{7}
Let $x\in 3P_n\cap L_n$ satisfy $(2)$, $(4)$. Let $A\subseteq\{1,2,\dots,n\}$ be a set with $|A|\ge 5$. Then $S_g(x-v,A)\ge 2$ for any $g\in\{\alpha,\beta,\gamma\}$ and any $x$-good vertex $v$ of $P_n$.
\end{lem}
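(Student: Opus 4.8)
The plan is to bound $S_\alpha(x-v,A)$ from below by writing it as a sum of non-negative per-index contributions and then locating at least two indices whose contribution is at least $1$. By the $\Aut(G)$-symmetry of $P_n$ it is enough to treat $g=\alpha$; the cases $g=\beta$ and $g=\gamma$ are entirely analogous. Writing the $x$-good vertex $v$ through its $G$-presentation $(g_1,\dots,g_n)\in H_n$ and using $S_\alpha(x-v,A)=S_\alpha(x,A)-S_\alpha(v,A)$, I would record
$$S_\alpha(x-v,A)=\sum_{j\in A}\big((x_0^j+x_\alpha^j)-(v_0^j+v_\alpha^j)\big)+\sum_{j\notin A}\big((x_\beta^j+x_\gamma^j)-(v_\beta^j+v_\gamma^j)\big),$$
where $v_0^j+v_\alpha^j$ equals $1$ when $g_j\in\{0,\alpha\}$ and $0$ otherwise. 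Because $v$ is $x$-good we have $x-v\ge 0$, so every summand $c_j$ is a non-negative integer; it therefore suffices to exhibit two indices $j$ with $c_j\ge1$.

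First I would discard all $j\notin A$ and the possible index $j=n$, keeping only $j\in A\cap\{1,\dots,n-1\}$; since $|A|\ge 5$ and at most one element of $A$ equals $n$, there remain at least four such indices. For each of them condition $(2)$ gives $x_0^j\ge1$, and a short case analysis on the value of $g_j$ shows that the contribution $c_j=(x_0^j+x_\alpha^j)-(v_0^j+v_\alpha^j)$ is at least $1$ in all cases but one, namely $g_j=0$, $x_\alpha^j=0$ and $x_0^j=1$. In that exceptional case $x_0^j=1$ is the maximum and the four coordinates sum to $3$, which forces the $j$-th multiset to be $\{0,\beta,\gamma\}$, i.e.\ of type $\{k/3,k/3,k/3,0\}$.

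Finally I would invoke condition $(4)$: at most two multisets of the $G$-presentation of $x$ are of type $\{k/3,k/3,k/3,0\}$, hence at most two of the (at least four) indices $j\in A\cap\{1,\dots,n-1\}$ are exceptional. At least two of them then satisfy $c_j\ge1$, and discarding the remaining non-negative contributions yields $S_\alpha(x-v,A)\ge 2$. The delicate point is exactly the identification of the vanishing contributions with the $\{k/3,k/3,k/3,0\}$ multisets, since this is where hypotheses $(2)$ and $(4)$ combine; it is also where the bound $|A|\ge 5$ is sharp, as $|A|=4$ would leave only three usable indices and thus guarantee just one good contribution.
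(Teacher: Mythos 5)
Your proof is correct and follows essentially the same route as the paper: both bound $S_g(x-v,A)$ from below by non-negative per-index contributions, use condition $(2)$ to force a contribution of at least $1$ at every index of $A\cap\{1,\dots,n-1\}$ whose multiset is not of type $\{1,1,1,0\}$, and use condition $(4)$ to bound the number of such exceptional indices by two, leaving at least $|A|-1-2\ge 2$ usable indices. If anything, your explicit case analysis on $g_j$ and your explicit discarding of the index $n$ are slightly more careful than the paper's one-line estimate, which glosses over the fact that condition $(2)$ does not apply to $j=n$.
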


\begin{proof}
Let  $B\subseteq \{1,2,\dots,n\}$ be the set of those indices for which multisets in $G$-presentation of $x$ are equal to $\{1,1,1,0\}$. This together with condition $(2)$ yields $x_0^j\ge 2$ for $j\not\in B$. Condition $(4)$ implies $|B|\le 2$. It follows that $$S(x-v,A)\ge \sum_{j\in A\setminus B} (x_0^j-1)\ge |A\setminus B|\ge 2.$$
\end{proof}

\section{The proof}
\subsection{Idea of the proof}
We prove for all positive integers $k,n$ that every point $x\in kP_n\cap L_n$ can be written in the form $x=y+v$ where $y\in (k-1)P_n$ and $v$ is a vertex of $P_n$. This, of course, means that also $y\in L_n$ since all vertices of $P_n$ belong to $L_n$ and this implies that $P_n$ is normal.

Consider a point $x\in kP_n\cap L_n$. It is sufficient to consider $k\ge 3$ because the case $k=2$ is solved by Lemma \ref{4}. Without loss of generality, from now we will suppose that $x$ satisfies $(1)$, $(2)$, $(3)$ and $(4)$. To conclude we need to pick an $x$-good vertex $v$ and then check that $y=x-v$ belongs to $(k-1)P_n$. We prove this by checking all inequalities from facet characterization of $P_n$ for every set $A$ with odd cardinality. 
 
Regarding the vertex $v$, we show we can always use some vertex $v\in V_n$ as in Definition 3. 

\subsection{Big sets $A$}
\begin{prop}\label{big}
Let $x\in kP_n\cap L_n, k\ge 3$ satisfy $(1)-(4)$ and let $A\subseteq\{1,2,\dots,n\}$ be a set with odd cardinality.  \begin{itemize}
\item[a)]If $|A|\ge 5$ then $S_g(x-v,A)\ge k-1$ for any $g\in\{\alpha,\beta,\gamma\}$ and any $x$-good vertex $v$ of $P_n$.
\item[b)] If $|A|=3,n\not\in A$ and $x$ satisfies $(4)$ then $S_g(x-v,A)\ge k-1$ for any $g\in\{\alpha,\beta,\gamma\}$ and any $x$-good vertex $v$ of $P_n$.
\end{itemize}
\end{prop}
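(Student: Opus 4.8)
The plan is to reduce both parts to a single parity observation combined with one per-coordinate estimate coming from Lemma \ref{1}. First note that $x-v\in L_n$ (vertices of $P_n$ lie in $L_n$) and that $\sum_{g}(x-v)_g^j=k-1$ for every $j$, so Lemma \ref{2} applies to $x-v$ and gives $S_g(x-v,A)\equiv k-1\pmod 2$. Since $x$ and $v$ are integral, $S_g(x-v,A)$ is an integer, so it suffices to prove the weaker bound $S_g(x-v,A)\ge k-2$: an integer that is $\ge k-2$ and congruent to $k-1$ modulo $2$ is automatically $\ge k-1$.

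The key local estimate is as follows. For an index $j\le n-1$, condition $(2)$ makes $x_0^j$ maximal, so Lemma \ref{1} yields $x_0^j+x_g^j\ge\lceil k/3\rceil$. If moreover $j\in A$, the contribution of $j$ to $S_g(x-v,A)$ equals $(x_0^j+x_g^j)-(v_0^j+v_g^j)\ge\lceil k/3\rceil-1$, using $v_0^j+v_g^j\le 1$; every remaining summand of $S_g(x-v,A)$ is nonnegative. Hence $S_g(x-v,A)$ is at least $\lceil k/3\rceil-1$ times the number of indices of $A$ lying in $\{1,\dots,n-1\}$.

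For part a) I split on $k$. When $k=3$ the local estimate is vacuous ($\lceil k/3\rceil-1=0$), but this is exactly Lemma \ref{7}, which already gives $S_g(x-v,A)\ge 2=k-1$. When $k\ge 4$, the set $A$ has at least $|A|-1\ge 4$ elements in $\{1,\dots,n-1\}$, so $S_g(x-v,A)\ge 4(\lceil k/3\rceil-1)$; a short check shows $4(\lceil k/3\rceil-1)\ge k-2$ for all $k\ge 4$, and the parity reduction finishes the case.

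For part b), since $n\notin A$ and $|A|=3$, all three indices of $A$ lie in $\{1,\dots,n-1\}$, so $S_g(x-v,A)\ge 3(\lceil k/3\rceil-1)$. If $3\nmid k$ then $3\lceil k/3\rceil\ge k+1$ and this already gives $\ge k-2$. The genuinely delicate case is $3\mid k$, where the bound degrades to $3(k/3-1)=k-3$, one short of what parity needs. Here I invoke the equality clause of Lemma \ref{1}: the local contribution of $j\in A$ attains its minimum $k/3-1$ only if $x_0^j+x_g^j=k/3$, which forces the $j$-th multiset to be $\{k/3,k/3,k/3,0\}$. Condition $(4)$ permits at most two such multisets, so at least one of the three indices of $A$ contributes $\ge k/3$ instead, giving $S_g(x-v,A)\ge 2(k/3-1)+k/3=k-2$, and parity again upgrades this to $k-1$. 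I expect this $3\mid k$ analysis in part b), where condition $(4)$ is indispensable, to be the main obstacle.
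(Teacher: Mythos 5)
Your proof is correct and follows essentially the same route as the paper: the per-index bound $x_0^j+x_g^j\ge\lceil k/3\rceil$ from Lemma \ref{1} (discarding all other summands since $v$ is $x$-good), the parity upgrade via Lemma \ref{2} applied to $x-v$, Lemma \ref{7} for the case $k=3$ in part a), and condition $(4)$ invoked through the equality clause of Lemma \ref{1} in part b). The only cosmetic difference is that in part b) the paper rules out the extremal value $S_g(x-v,A)=k-3$ by deriving a contradiction with $(4)$, whereas you split on $3\mid k$ and use $(4)$ to improve the bound directly to $k-2$ before applying parity --- the same argument in a slightly different arrangement.
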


\begin{proof}
Let $y=x-v$. Clearly, it is sufficient to prove the inequality for $g=\alpha$. We begin with part a):
\begin{align*}
\sum_{j\in A} (y_0^j+y_\alpha^j)+\sum_{j\not\in A} (y_\beta^j+y_\gamma^j)&\ge \sum_{j\in A\setminus\{n\}} (y_0^j+y_\alpha^j)\\
&\ge \sum_{j\in A\setminus\{n\}} (x_0^j+x_\alpha^j-1)  \\
&\ge \sum_{j\in A\setminus\{n\}} (\lceil k/3\rceil -1) \\
&\ge 4\lceil k/3\rceil-4\ge k-2.
\end{align*}

The last inequality holds for $k\ge 4$. Case $k=3$ is covered in Lemma \ref{7}. We also used Lemma \ref{1} and $|A\setminus\{n\}|\ge 4$. Inequality $S_g(y,A)\ge k-2$ together with Lemma \ref{2} implies $S_g(y,A)\ge k-1$.

Proof of part b) is similar:

\begin{align*}
\sum_{j\in A} (y_0^j+y_\alpha^j)+\sum_{j\not\in A} (y_\beta^j+y_\gamma^j)&\ge \sum_{j\in A} (y_0^j+y_\alpha^j)\\
&\ge \sum_{j\in A} (x_0^j+x_\alpha^j-1)  \\
&\ge 3\lceil k/3\rceil-3\ge k-3,
\end{align*}

where we again used Lemma \ref{1}. Lemma \ref{2} implies that $S_g(y,A)\neq k-2$. Therefore the only bad case is when we have an equality. This is possible only if we have equality everywhere, in particular $x_0^j=x_\beta^j=x_\gamma^j=k/3$ for all $j\in A$. But this means that $x$ does not satisfy $(4)$ which is a contradiction.
\end{proof}
Therefore it is sufficient to check inequalities for $|A|=1$ and $|A|=3$ such that $n\in A$.

\subsection{Small sets $A$}
Since $x\in kP_n$ we have the inequalities $S_g(x,A)\ge k$ for any $g$ and any set $A$ with odd cardinality. For big sets $A$ discussed in the section 5.2 we have not used them. However, we use them for smaller sets. Our first step is to observe how does $S_g(x,A)$ change when we subtract some vertex $v\in V_n$ from $x$. 
\begin{lem}\label{small}
Let $x\in kP_n\cap L_n$, $v\in V_n$, $g\in\{\alpha,\beta,\gamma\}$ and $|A|=1$ or $|A|=3$ with $n\in A$. Then $$S_g(x-v,A)=S_g(x,A)-3 \text{ or } S_g(x-v,A)=S_g(x,A)-1.$$ Moreover, for $|A|=1$ we have $S_g(x-v,A)=S_g(x,A)-1$ if and only if one of the following conditions holds:\begin{itemize}
\item $v=v(0)$
\item $v=v(g)_{j,n}$ for any $1\le j\le n-1$
\item $v=v(g')_{j,n}$ for $g'\neq g$ and $A=\{j\}$ or $|A|=\{n\}$
\end{itemize}
Also $S_g(x-v,\{n\})\ge k-1$.
\end{lem}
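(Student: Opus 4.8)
The plan is to exploit that each $S_g(\cdot,A)$ is a linear functional on $\mathbb R^{4n}$, so $S_g(x-v,A)=S_g(x,A)-S_g(v,A)$, and the whole statement reduces to computing $S_g(v,A)$ for the (finitely many) vertices $v\in V_n$ and the admissible sets $A$. Since every $v\in V_n$ has a $G$-presentation consisting of single-element multisets (an $n$-tuple in $H_n$), the quantity $S_g(v,A)$ is exactly the number of indices $p$ whose contribution to the defining sum equals $1$: those $p\in A$ whose entry lies in $\{0,g\}$, together with those $p\notin A$ whose entry lies in $G\setminus\{0,g\}$.

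The decisive observation is how one entry affects this count. An index carrying the entry $0$ contributes $1$ precisely when $p\in A$; an index carrying the entry $g$ (the same element as the sum type) contributes in exactly the same way, since $g\in\{0,g\}$, so replacing a $0$ by $g$ never changes $S_g$. By contrast, an index carrying an entry $g'\in G\setminus\{0,g\}$ contributes $1$ precisely when $p\notin A$, the opposite of the $0$-entry. Relative to the all-zero vertex $v(0)$, for which $S_g(v(0),A)=|A|$, replacing the entry at an index $p$ by such a bad element changes the contribution by $+1$ if $p\notin A$ and by $-1$ if $p\in A$. The vertex $v(g')_{j,n}$ differs from $v(0)$ only at the two indices $j$ and $n$, both carrying $g'$; hence, writing $t$ for the number of these two indices lying in $A$, we get $S_g(v(g')_{j,n},A)=|A|$ when $g'=g$, and
$$S_g(v(g')_{j,n},A)=|A|+2-2t \qquad\text{when } g'\neq g.$$

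From this the claims follow by inspection. For $|A|=1$: the value is $1$ for $v(0)$ and for $v(g)_{j,n}$, while for $g'\neq g$ it equals $3-2t\in\{1,3\}$, taking the value $1$ exactly when $t=1$, i.e. when $A=\{j\}$ or $A=\{n\}$; this is precisely the stated list of cases in which $S_g$ drops by $1$ rather than $3$. For $|A|=3$ with $n\in A$: the index $n$ always lies in $A$, so $1\le t\le 2$, giving $5-2t\in\{1,3\}$ when $g'\neq g$, while $v(0)$ and the $g'=g$ vertices give $3$; thus the drop is always $1$ or $3$. Finally, for $A=\{n\}$ we have $n\in A$ but $j\notin A$ (as $j\le n-1$), forcing $t=1$ for every $v(g')_{j,n}$, so $S_g(v,\{n\})=1$ for all $v\in V_n$; hence $S_g(x-v,\{n\})=S_g(x,\{n\})-1\ge k-1$, using $x\in kP_n$.

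There is no genuine obstacle here beyond careful bookkeeping: the content is the single parity-like observation that a $0$-entry and a $g$-entry are interchangeable for $S_g$ while a bad entry reverses the contribution. As a consistency check, each computed value $S_g(v,A)$ is odd, in agreement with Lemma \ref{2} applied to the vertex $v\in 1\cdot P_n\cap L_n$.
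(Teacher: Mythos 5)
Your proof is correct and takes essentially the same approach as the paper: the paper's one-line argument (``one checks how many summands in $S_g(x,A)$ will decrease by $1$'') is precisely your linearity computation of $S_g(v,A)$, and your treatment of $A=\{n\}$ matches the paper's closing remark that $S_g(x,\{n\})\ge k$ gives the final bound. Your explicit formula $S_g(v(g')_{j,n},A)=|A|+2-2t$ simply carries out the bookkeeping the paper leaves to the reader, and correctly handles the paper's typo ``$|A|=\{n\}$'' as $A=\{n\}$.
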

\begin{proof}
For the first part, one checks how many summands in $S_g(x,A)$ will decrease by 1 when we subtract $v$. The last part is clear consequence since $S_g(x,\{n\})\ge k$ for $x\in kP_n$.
\end{proof}

Now we consider the following:

\begin{prop}\label{0} Let $x\in kP_n\cap L_n$ satisfy conditions $(1)-(4)$. Suppose that $0$ is also the most frequent element in the $n$-th multiset from $G$-presentation of $x$. Then $x-v(0)\in (k-1)P_n$.
\end{prop}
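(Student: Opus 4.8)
The plan is to take the candidate vertex $v := v(0)$ and show both that it is $x$-good and that $y := x - v(0)$ lies in $(k-1)P_n$. First I would dispose of $x$-goodness and the linear conditions. By hypothesis $0$ is the most frequent element in the $n$-th multiset, while condition $(2)$ gives the same for $j \le n-1$; hence $x_0^j = \max\{x_0^j,x_\alpha^j,x_\beta^j,x_\gamma^j\} \ge \lceil k/4\rceil \ge 1$ for every $j$, so all coordinates of $y = x - v(0)$ are non-negative and $v(0)$ is $x$-good. Moreover $y \in L_n$ as a difference of lattice points, and $\sum_{g} y_g^j = \sum_g x_g^j - 1 = k-1$ for each $j$.

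It then remains to verify $S_g(y,A) \ge k-1$ for all odd $A$ and all $g \in \{\alpha,\beta,\gamma\}$. The large sets require no new work: for $|A| \ge 5$ part a) of Proposition \ref{big}, and for $|A| = 3$ with $n \notin A$ part b), already give $S_g(x-v,A) \ge k-1$ for every $x$-good $v$, in particular for $v(0)$. For the remaining small sets I would track the drop precisely. Subtracting $v(0)$ lowers $x_0^j$ by one for every $j$, and $x_0^j$ enters $S_g(x,A)$ exactly for $j \in A$, so the value drops by exactly $|A|$ (consistent with Lemma \ref{small}). Thus for $|A|=1$ we immediately get $S_g(y,A) = S_g(x,A) - 1 \ge k-1$ from $S_g(x,A) \ge k$.

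The one delicate case, and the main obstacle, is $|A| = 3$ with $n \in A$, where the drop is $3$ and a priori yields only $S_g(y,A) \ge k-3$. Here I would show that $S_g(x,A) = k$ is impossible. Since $x_0^j$ is maximal for all three $j \in A$ (using $(2)$ together with the proposition's hypothesis for $j=n$), Lemma \ref{1} gives $x_0^j + x_g^j \ge \lceil k/3\rceil$, whence $S_g(x,A) \ge 3\lceil k/3\rceil \ge k$. Equality would force $3 \mid k$ with $x_0^j + x_g^j = k/3$ for each $j \in A$, and the equality clause of Lemma \ref{1} then pins every such multiset to $\{k/3,k/3,k/3,0\}$; this produces three such multisets, contradicting condition $(4)$. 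Therefore $S_g(x,A) \ge k+1$, and the parity relation $S_g(x,A) \equiv k \pmod 2$ from Lemma \ref{2} upgrades this to $S_g(x,A) \ge k+2$, giving $S_g(y,A) = S_g(x,A) - 3 \ge k-1$. Having covered all odd cardinalities, $y \in (k-1)P_n$, which is the claim.
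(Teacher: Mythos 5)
Your proof is correct and takes essentially the same route as the paper: Proposition \ref{big} handles $|A|\ge 5$ and $|A|=3$ with $n\notin A$, the exact drop of $|A|$ when subtracting $v(0)$ handles $|A|=1$, and for $|A|=3$ with $n\in A$ your combination of Lemma \ref{1} (now applicable to $j=n$ thanks to the extra hypothesis), the equality analysis ruled out by condition $(4)$, and the parity from Lemma \ref{2} is precisely what the paper means by ``we can use same arguments'' as in part b) of Proposition \ref{big}. The only cosmetic difference is that you apply the parity bound to $S_g(x,A)$ before subtracting (getting $S_g(x,A)\ge k+2$), whereas the paper runs the same estimate on $S_g(x-v,A)$ directly.
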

\begin{proof}
Obviously, every multiset from $G$-presentation of $x$ contains $0$ so $x-v(0)$ has non-negative coordinates and therefore $v(0)$ is $x$-good. Inequalities for sets with $|A|\ge 3$ hold for $x-v$ by Proposition \ref{big} since for sets with $|A|=3$ and $n\in A$ we can use same arguments. Inequalities for $|A|=1$ hold by Lemma \ref{small} since we are subtracting $v(0)$.
It follows that $x-v(0)\in (k-1)P_n$. 
\end{proof}

The previous proposition implies that we can assume that for $x\in kP_n\cap L_n$ satisfying $(1),(2),(3)$ also the following condition holds:
\begin{equation}
\begin{aligned}
&\text{There exists no } h\in H_n\text{ such that the following conditions holds:} 
\\ &0\text{ is the most frequent element in all multisests from } G \text{-presentation of }hx.
\end{aligned}
\end{equation}

\begin{prop}\label{A3}
Let $x\in kP_n\cap L_n$ satisfy $(1)-(5)$. Then:
\begin{itemize} 
\item[a)]$x$ does not belong to any facet $kF_g(A)$ for $|A|=3,n\in A$, i.e. $S_g(x,A)>k$ for all such $A$ and $g=\alpha,\beta,\gamma$.
\item[b)] $S_g(x-v,A)\ge k-1$ for all $v\in V_n$, $g\in\{\alpha,\beta,\gamma\}$ and $|A|=3,n\in A$.
\end{itemize}
\end{prop}
\begin{proof}
We prove part a) by contradiction: Suppose that we have an equality for $A=\{1,2,n\}$ and $g=\alpha$. We may get to this situation by acting with suitable $\sigma\in \mathbb S_n$ and $\varphi\in \Aut (G)$. We compute $S_\alpha(x,A)$:

$$S_\alpha(x,A)=\sum_{j\in A} (x_0^j+x_\alpha^j)+\sum_{j\not\in A} (x_\beta^j+x_\gamma^j)\ge x_0^1+x_0^2+x_0^n+x_\alpha^n=$$
$$=x_0^1+x_0^2+k-(x_\beta^n+x_\gamma^n)\ge x_0^1+x_0^2+k-2\min\{x_0^1,x_0^2\}\ge k.$$

An equality holds only if there is equality in all inequalities. In particular, it means that $x_0^1=x_0^2=x_\beta^n=x_\gamma^n$ and $x_\alpha^1=0$. But from ordering of multisets, we get that also some $x_{g_0}^1=x_0^1$ for $g_0=\beta$ or $g_0=\gamma$. By acting with $(g_0,0,\dots,0,g_0)\in H$ we get to the situation where $0$ is the most frequent also in $n$-th multiset and still is also most frequent on the first one. This is a contradiction with condition $(5)$.

We continue with proof of part b). Part a) together with Lemma \ref{2} implies that $S_g(x,A)\ge k+2$. Consequently, Lemma \ref{small} implies $S_g(x-v,A)\ge k-1$ for any $v\in V_n$.
\end{proof}

\begin{prop}\label{x_n>0}
Let $x\in kP_n\cap L_n$ satisfy $(1)-(5)$ and $x_0^n>0$. Then $x-v(0)\in (k-1)P_n$.
\end{prop}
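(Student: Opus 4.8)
The plan is to set $y = x - v(0)$ and verify directly that $y$ satisfies every defining inequality of $(k-1)P_n$. First I would confirm that $v(0)$ is $x$-good. Since $v(0)$ has $G$-presentation $(0,\dots,0)$, subtracting it simply lowers each coordinate $x_0^j$ by one and leaves the others fixed. By condition $(2)$ we have $x_0^j = \max\{x_0^j,x_\alpha^j,x_\beta^j,x_\gamma^j\} \ge k/4 > 0$ for $1\le j\le n-1$, and the hypothesis supplies $x_0^n>0$; as the coordinates are integers, every $x_0^j \ge 1$, so all coordinates of $y$ stay non-negative. The column sums of $y$ are each $k-1$, as required, so only the facet inequalities $S_g(y,A)\ge k-1$ remain to be checked.

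I would then split the verification according to $|A|$, for odd-cardinality $A$ and $g\in\{\alpha,\beta,\gamma\}$. For $|A|\ge 5$, Proposition \ref{big}(a) applies to any $x$-good vertex, hence to $v(0)$. For $|A|=3$ with $n\notin A$, Proposition \ref{big}(b) gives the bound directly, using condition $(4)$. Neither of these cases needs a new argument; they are inherited verbatim from the big-set analysis.

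The two remaining cases are $|A|=3$ with $n\in A$, and $|A|=1$. The key observation is that $v(0)\in V_n$, so Proposition \ref{A3}(b) applies and immediately yields $S_g(y,A)\ge k-1$ for $|A|=3$, $n\in A$. For $|A|=1$, I would invoke Lemma \ref{small}: subtracting $v=v(0)$ always lands in the case $S_g(x-v(0),A)=S_g(x,A)-1$, and since $x\in kP_n$ forces $S_g(x,A)\ge k$, we get $S_g(y,A)\ge k-1$. Assembling these cases gives $y\in(k-1)P_n$.

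The genuine difficulty lives entirely in the case $|A|=3$, $n\in A$, and it is exactly there that condition $(5)$ is indispensable. Without it, $S_g(x,A)$ could equal $k$, after which subtracting a vertex that decreases three of the summands would leave only $S_g(y,A)=k-3$. Condition $(5)$ is what powers Proposition \ref{A3}(a), ruling out that $x$ sits on such a facet and forcing $S_g(x,A)\ge k+2$, so that even a drop of three lands at or above $k-1$. I expect the main obstacle to be recognizing that this case must be routed through Proposition \ref{A3} rather than Proposition \ref{big} — once that proposition is available, the present statement is pure bookkeeping over the facet inequalities.
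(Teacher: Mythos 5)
Your proposal is correct and follows essentially the same route as the paper: $v(0)$ is $x$-good by condition $(2)$ and $x_0^n>0$, the cases $|A|\ge 3$ are dispatched by Propositions \ref{big} and \ref{A3} (the latter using condition $(5)$, exactly as you note), and the case $|A|=1$ follows from Lemma \ref{small} since subtracting $v(0)$ lowers $S_g(x,A)$ by exactly $1$. Your case-by-case bookkeeping is just a slightly more explicit write-up of the paper's argument, with no substantive difference.
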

\begin{proof}
Clearly, $v(0)$ is $x$-good. Inequalities for $|A|\ge 3$ hold by Propositions \ref{big} and \ref{A3}. For $|A|=1$ we have $S_g(x,A)\ge k$, then by Lemma \ref{small} we get $S_g(x-v(0),A)\ge k-1$. Since all inequalities hold $x-v(0)\in (k-1)P_n$.
\end{proof}

Therefore we are left only with the case $x_0^n=0$.

\subsection{Special case $x_0^n=0$}
In this case we will subtract a vertex $v(g)_{j,n}$ for a special choice of $g$ and $j$. Propositions \ref{big} and \ref{A3} and Lemma \ref{small} imply that it is enough to check inequalities for $|A|=1, A\neq\{n\}$.
We distinguish two cases depending on whether $x$ lies or does not lie on a facet $kF_g(A)$ for such $A$.

\begin{prop}\label{0nof}
Let $x\in kP_n\cap L_n$ satisfy $(1)-(5)$, $x_0^n=0$ and $x$ does not belong to any facet $kF_g(A)$ for $|A|=1,A\neq\{n\}$. Then there exists a vertex $v\in V_n$ such that $x-v\in (k-1)P_n$.
\end{prop}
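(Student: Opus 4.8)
\subsection*{Proof proposal}
The plan is to reduce the verification that $x-v\in(k-1)P_n$ to two ingredients: first that the chosen $v$ is $x$-good (so that $x-v$ has nonnegative coordinates and every column of $x-v$ sums to $k-1$, since each vertex in $V_n$ has all column sums equal to $1$), and second that the facet inequalities $S_g(x-v,A)\ge k-1$ hold for all $g\in\{\alpha,\beta,\gamma\}$ and all odd $A$. By Propositions \ref{big} and \ref{A3} the latter already holds for every $x$-good $v\in V_n$ whenever $|A|\ge 3$, and by the final assertion of Lemma \ref{small} it holds for $A=\{n\}$. Thus everything reduces to (i) producing an $x$-good vertex $v\in V_n$ and (ii) controlling $S_g(x-v,\{j\})$ for the remaining singletons $A=\{j\}$ with $j\neq n$.

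Step (ii) is where I would use the standing hypothesis that $x$ lies on no facet $kF_g(\{j\})$ with $j\neq n$. This means $S_g(x,\{j\})>k$, and since $S_g(x,\{j\})\equiv k\pmod 2$ by Lemma \ref{2}, in fact $S_g(x,\{j\})\ge k+2$. By Lemma \ref{small} subtracting any $v\in V_n$ lowers each such functional by at most $3$, so $S_g(x-v,\{j\})\ge k-1$ automatically, for every $v\in V_n$ and every $g$. Consequently, once an $x$-good vertex $v\in V_n$ is exhibited, all facet inequalities for $x-v$ hold and $x-v\in(k-1)P_n$.

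The heart of the matter is therefore step (i), which I expect to be the main obstacle. Since $x_0^n=0$, the vertex $v(0)$ is not $x$-good, so I must find an $x$-good vertex $v(g)_{j,n}$, i.e. some $g\in\{\alpha,\beta,\gamma\}$ and $j\le n-1$ with $x_g^n\ge 1$ and $x_g^j\ge 1$. I would argue by contradiction. Let $T=\{g:x_g^n\ge 1\}$; because $x_0^n=0$ and $\sum_g x_g^n=k$ while $x_g^n<k$ by $(3)$, at least two nonzero elements lie in $T$. If $|T|=3$, then the assumed failure forces $x_g^j=0$ for all $g\neq 0$ and all $j<n$, whence $x_0^j=k$, contradicting $(3)$. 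So $|T|=2$, say $T=\{g_1,g_2\}$ with the third nonzero element $g_3\notin T$; failure of step (i) then forces $x_{g_1}^j=x_{g_2}^j=0$ for all $j<n$, so every column $j<n$ is supported on $\{0,g_3\}$.

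To close, I would evaluate the facet functional $S_{g_3}$ on $A=\{n\}$: since $g_3$ is paired with $0$ on $A$ and with $g_1,g_2$ off $A$, one obtains
$$S_{g_3}(x,\{n\})=(x_0^n+x_{g_3}^n)+\sum_{j<n}(x_{g_1}^j+x_{g_2}^j)=0,$$
which contradicts $x\in kP_n$, as membership demands $S_{g_3}(x,\{n\})\ge k\ge 3$. This contradiction produces an $x$-good vertex $v(g)_{j,n}\in V_n$, and combined with steps (i)--(ii) it gives $x-v\in(k-1)P_n$. The key idea driving the existence step is that the group element $g_3$ \emph{missing} from the last column detects, through its own inequality at $A=\{n\}$, that the columns $j<n$ cannot all avoid the two elements occurring in the last column.
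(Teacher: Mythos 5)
Your proposal is correct and takes essentially the same route as the paper: the reduction of everything to finding an $x$-good vertex in $V_n$ (via Lemma \ref{small}, the parity bound $S_g(x,\{j\})\ge k+2$ from Lemma \ref{2}, and Propositions \ref{big} and \ref{A3}) is identical, and your contradiction via $S_{g_3}(x,\{n\})=0$ is just the contrapositive of the paper's direct argument, which uses $S_\gamma(x,\{n\})\ge k$ together with $x_0^n+x_\gamma^n<k$ to locate a nonzero $x_{g_0}^j$ with $j<n$. The one detail you leave implicit is that $x$-goodness of $v(g)_{j,n}$ also requires $x_0^i\ge 1$ for the remaining columns $i<n$, which follows immediately from condition $(2)$, as the paper notes.
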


\begin{proof}
For any $v\in V_n $ Lemma \ref{small} implies that for any $A$ with $|A|=1,A\neq\{n\}$ we have $S_g(x-v,A)\ge S_g(x,A)-3\ge k-1$. We used Lemma \ref{2} to deduce inequality $S_g(x,A)\ge k+2$. Therefore, inequalities for every set $A$ hold for any $x$-good vertex $v\in V_n$, since bigger sets are taken care of by Propositions \ref{big} and \ref{A3}. Consequently, it is sufficient to pick any $x$-good vertex $v\in V_n$. 
 
At least two of the numbers $x_g^n$ for $g\in\{\alpha,\beta,\gamma\}$ must be non-zero by condition $(3)$ and the fact that $x_0^n=0$. Without loss of generality, let those two coordinates be $x_\alpha^n$ and $x_\beta^n$.

Since $S_\gamma(x,\{n\})\ge k$ and $x_0^n+x_\gamma^n<k$, at least one of the numbers $x_\alpha^j$ and $x_\beta^j$ for $1\le j\le n-1$ must be non-zero. Let it be $x_{g_0}^j$. For $v=v(g_0)_{j,n}$ all coordinates of $x-v$ are non-negative since condition $(2)$ implies $x_0^j>0$ for $1\le j\le n-1$. This means we have found $x$-good vertex $v\in V_n$ and the proposition is proved.
\end{proof}

If $x$ belongs to a facet we prove that it belongs to only one facet and that we can as well subtract a vertex $v\in V_n$:

\begin{prop}\label{0f}
Let $x\in kP_n\cap L_n$ satisfy $(1)-(4)$, $x_0^n=0$ and $x$ belongs to some facet $kF_g(A)$ for $|A|=1,A\neq\{n\}$. Then
\begin{itemize}
\item[a)] $x$ belongs to only one such facet.
\item[b)] There exists a vertex $v\in V_n$ such that $x-v\in (k-1)P_n$.
\end{itemize}
\end{prop}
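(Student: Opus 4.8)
The plan is to prove (a) first and then read off (b) from it. For (a), I argue by contradiction: suppose $x$ lies on two distinct facets $kF_{g_1}(\{j_1\})$ and $kF_{g_2}(\{j_2\})$ with $1\le j_1,j_2\le n-1$. The basic tool is the identity $S_\alpha(x,\{j\})=k+\sum_i(x_\beta^i+x_\gamma^i)-2(x_\beta^j+x_\gamma^j)$ (and its analogues), so that $x$ lies on $kF_\alpha(\{j\})$ exactly when $x_\beta^j+x_\gamma^j$ equals half of $\sum_i(x_\beta^i+x_\gamma^i)$. Since $\Aut(G)$ fixes $0$, it preserves $(1)$--$(4)$ and the hypothesis $x_0^n=0$ while permuting the three facet families $F_\alpha,F_\beta,F_\gamma$, so I may normalize the letters $g_1,g_2$. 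If $g_1=g_2=\alpha$, the two equalities force $x_\beta^{j_1}+x_\gamma^{j_1}=x_\beta^{j_2}+x_\gamma^{j_2}=\tfrac12\sum_i(x_\beta^i+x_\gamma^i)$, hence $x_\beta^i+x_\gamma^i=0$ for all $i\notin\{j_1,j_2\}$; in particular $x_\beta^n=x_\gamma^n=0$, so $x_\alpha^n=k$ by $x_0^n=0$, contradicting $(3)$. Thus I may assume $g_1=\alpha$, $g_2=\beta$.

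The heart of the matter, and the step I expect to be the main obstacle, is the different-letter case. Here I add the two facet equalities to obtain $S_\alpha(x,\{j_1\})+S_\beta(x,\{j_2\})=2k$ and regroup the left side by index. Because $j_1,j_2\le n-1$, index $n$ contributes $(x_\beta^n+x_\gamma^n)+(x_\alpha^n+x_\gamma^n)=k+x_\gamma^n\ge k$ via $x_0^n=0$. When $j_1=j_2=j$, index $j$ contributes $2x_0^j+x_\alpha^j+x_\beta^j=k+x_0^j-x_\gamma^j\ge k$ by $(2)$, and the remaining indices contribute nonnegatively, so the total $2k$ forces every $i\notin\{j,n\}$ to contribute $0$, i.e.\ $x_0^i=k$, contradicting $(3)$ (such an $i$ exists as $n\ge 4$). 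When $j_1\ne j_2$, the same regrouping gives on one hand $x_0^{j_1}+x_0^{j_2}\le k-x_\gamma^n$ (bounding the $j_1,j_2$ contributions below by $x_0^{j_1},x_0^{j_2}$ and the rest by $0$), and on the other hand, rewriting those contributions as $k+(x_\alpha^{j_1}-x_\beta^{j_1})$ and $k+(x_\beta^{j_2}-x_\alpha^{j_2})$, it yields $(x_\beta^{j_1}-x_\alpha^{j_1})+(x_\alpha^{j_2}-x_\beta^{j_2})\ge k$, whence $x_0^{j_1}+x_0^{j_2}\ge x_\beta^{j_1}+x_\alpha^{j_2}\ge k$ by $(2)$. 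The two bounds squeeze $x_0^{j_1}+x_0^{j_2}=k$ and force all indices $i\notin\{j_1,j_2,n\}$ to contribute $0$, again giving $x_0^i=k$ for some such $i$ (as $n\ge4$) and contradicting $(3)$. This proves (a).

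For (b), I exploit that by (a) the point $x$ lies on exactly one such facet, say $kF_\alpha(\{a\})$ with $a\le n-1$ after a suitable $\varphi\in\Aut(G)$. For any other set with $|A|=1$, $A\ne\{n\}$, Lemma \ref{2} upgrades $S_g(x,A)>k$ to $S_g(x,A)\ge k+2$, so Lemma \ref{small} keeps $S_g(x-v,A)\ge k-1$ for any $v\in V_n$; the case $A=\{n\}$ is covered by Lemma \ref{small}, and all $|A|\ge 3$ by Propositions \ref{big} and \ref{A3}, for any $x$-good $v\in V_n$. Hence it suffices to pick an $x$-good $v\in V_n$ that drops the tight facet by only $1$, i.e.\ with $S_\alpha(x-v,\{a\})=k-1$. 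If $x_\alpha^n\ge 1$, I take $v=v(\alpha)_{a,n}$: it is $x$-good since $x_0^a>0$ by $(2)$, and drops $S_\alpha(\cdot,\{a\})$ by $1$ by Lemma \ref{small}. If $x_\alpha^n=0$, then $x_0^n=x_\alpha^n=0$ forces $x_\beta^n+x_\gamma^n=k$ with both summands at least $1$ by $(3)$, while the facet equality forces $x_\beta^a+x_\gamma^a>0$; choosing $g'\in\{\beta,\gamma\}$ with $x_{g'}^a\ge1$, the vertex $v=v(g')_{a,n}$ is $x$-good and drops $S_\alpha(\cdot,\{a\})$ by exactly $1$ by the $A=\{j\}$ clause of Lemma \ref{small}. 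In either case $x-v\in(k-1)P_n$, which gives (b).
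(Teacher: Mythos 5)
Your part (a) is correct, and it is a genuinely different argument from the paper's: you sum the two facet equalities and regroup the contributions index by index, forcing $x_g^i=k$ somewhere, whereas the paper extracts forced zero coordinates from a \emph{single} facet equality (using the orderings (1)--(2) to get $x_0^a\ge x_\alpha^n$) and then shows a second facet would contradict (3). Your route is more symmetric, self-contained, and does not even use condition (1). The framework of your part (b) is also the right one (uniqueness plus Lemmas \ref{2} and \ref{small} and Propositions \ref{big} and \ref{A3} reduce everything to finding one $x$-good $v\in V_n$ that drops the tight facet by exactly $1$).

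However, part (b) has a genuine gap: in your main case the chosen vertex is never $x$-good. If $x\in kF_\alpha(\{a\})$ with $a\le n-1$, $x_0^n=0$ and (1)--(3) hold, then
$$k=S_\alpha(x,\{a\})\ \ge\ x_0^a+x_\alpha^a+(x_\beta^n+x_\gamma^n)\ =\ x_0^a+x_\alpha^a+k-x_\alpha^n,$$
and $x_0^a\ge\max_g x_g^n\ge x_\alpha^n$ by (1) and (2); hence equality forces $x_\alpha^a=0$ and $x_0^a=x_\alpha^n=\max_g x_g^n>0$. Two consequences: your case $x_\alpha^n=0$ is vacuous, and in the case $x_\alpha^n\ge 1$ (which always occurs) your vertex $v=v(\alpha)_{a,n}$ satisfies $(x-v)_\alpha^a=-1$, so it is not $x$-good. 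Your justification ``$x$-good since $x_0^a>0$'' checks the wrong coordinate: goodness of $v(g)_{j,n}$ requires $x_g^j\ge 1$, $x_g^n\ge 1$ and $x_0^i\ge 1$ for $i\ne j,n$, and it is exactly $x_\alpha^a\ge 1$ that fails. The repair (this is what the paper does) is to use the other forced equalities, $x_\beta^j=x_\gamma^j=0$ for $j\ne a,n$: then $S_\alpha(x,\{n\})=x_\alpha^n+x_\beta^a+x_\gamma^a\ge k$ together with $x_\alpha^n<k$ gives some $x_g^a>0$ with $g\in\{\beta,\gamma\}$, and (3) gives some $x_{g'}^n>0$ with $g'\in\{\beta,\gamma\}$. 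If one can take $g=g'$, then $v(g)_{a,n}$ is $x$-good and drops $S_\alpha(\cdot,\{a\})$ by exactly $1$ (third clause of Lemma \ref{small}); otherwise, after swapping $\beta\leftrightarrow\gamma$ one has $x_\beta^a=x_\gamma^n=0$, and $S_\gamma(x,\{n\})=\sum_{j\ne a,n}x_\alpha^j\ge k$ yields some $j\ne a,n$ with $x_\alpha^j>0$, so $v(\alpha)_{j,n}$ with this $j\neq a$ is $x$-good (here $x_\alpha^n=x_0^a>0$ is what you need) and drops the tight facet by $1$ by the second clause of Lemma \ref{small}. So the vertex you want of the form $v(\alpha)_{\cdot,n}$ exists, but only with second index $j\ne a$; finding it requires the additional inequalities $S_\alpha(x,\{n\})\ge k$ and $S_\gamma(x,\{n\})\ge k$, which your proposal never invokes.
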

\begin{proof}

By acting with suitable permutation from $\mathbb S_n$ and $\varphi\in \Aut (G)$ we can get to situation where $x\in kF_\alpha(\{1\})$. We have

 $$k=S_\alpha(x,\{1\})\ge x_0^1+x_\beta^n+x_\gamma^n=x_0^1+k-x_\alpha^n\ge k.$$

To get an equality, there must be an equality in all inequalities, specifically $x_\alpha^1=x_\beta^j=x_\gamma^j=0$ for all $2\le j\le n$ and $x_0^1=x_\alpha^n=\max_{g\in G}\{x_g^n\}$. 

Assumption that $x$ belongs to a facet give us strong conditions. It is easy to see that $x$ cannot belong to some other facet $kF_\alpha(\{j\})$ for $j<n$ because it would imply $x_\beta^1=x_\gamma^1=0$. But this is a contradiction with condition $(3)$. Also $x$ cannot belong to some $kF_\beta(\{j\})$ for $1\le j<n$ because it would imply $x_\alpha^i=x_\beta^i=x_\gamma^i$ for $i\neq 1,j,n$ which is again a contradiction with $(3)$. Same arguments hold for $kF_\gamma(\{j\})$. This proves part a).

For part b), by the same arguments as in the proof of Proposition \ref{0nof} for any $x$-good vertex $v\in V_n,\ g\in\{\alpha,\beta,\gamma\}$ and set $A$ we have $S_g(x-v,A)\ge k-1$, except the case when $g=\alpha$ and $A=\{1\}$. 

Since $k\le S_\alpha(x,\{n\})=x_\beta^1+x_\gamma^1+x_\alpha^n$ at least one of the numbers $x_\beta^1$, $x_\gamma^1$ must be greater than 0. Also one of the numbers $x_\beta^n$ and $x_\gamma^n$ is greater than zero by condition $(3)$.

If both numbers $x_g^n$ and $x_g^1$ are greater than zero for $g=\beta$ or $g=\gamma$ then the vertex $v=v(g)_{1,n}$ is $x$-good. By Lemma \ref{small} also $S_\alpha((x-v),\{1\})\ge k-1$  and therefore $x-v\in (k-1)P_n$.

 Suppose the opposite, i.e. $x_\beta^1=0$ and $x_\gamma^n=0$ (we can get to this case by acting with $\varphi\in \Aut (G)$). Then $S_\gamma(x,\{n\})\ge k$ implies that at least one of the numbers $x_\alpha^j$ for $2\le j\le n-1$ is greater than 0. Then we can subtract $v=v(\alpha)_{j,n}$ for such $j$.
 Again $x-v$ has non-negative coordinates and by Lemma \ref{small} $S_\alpha((x-v),\{1\})\ge k-1$.

\end{proof}

\begin{thm}
Polytope representing 3-Kimura model $P_n$ is normal for every positive integer $n$.
\end{thm}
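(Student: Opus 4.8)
The plan is to combine the reduction of Lemma \ref{3} with an induction that peels off one vertex at a time. By Lemma \ref{3}, to establish normality of every $P_n$ it suffices to show that each $x\in kP_m\cap L_m$ satisfying condition $(3)$ can be written as a sum of $k$ vertices of $P_m$; I would prove this by induction on $k$, with the base cases $k=1$ immediate and $k=2$ furnished by Lemma \ref{4}. For the inductive step with $k\ge 3$ the goal is to exhibit an $x$-good vertex $v$ with $x-v\in(k-1)P_n$, whence $y:=x-v$ lies in $(k-1)P_n\cap L_n$ and decomposes into $k-1$ vertices by the inductive hypothesis, so that $x=y+v$ is a sum of $k$ vertices. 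Before choosing $v$ I would normalize $x$ using the symmetry actions so that conditions $(1)$ and $(2)$ also hold, and invoke Lemma \ref{6} to dispose of the configuration with three multisets $\{k/3,k/3,k/3,0\}$, so that $(4)$ may be assumed as well.

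The heart of the argument is the verification, for the chosen $v$, of all facet inequalities of $(k-1)P_n$. Here the key simplification is that Proposition \ref{big} already secures these inequalities for every odd set $A$ with $|A|\ge 5$ and for $|A|=3$ with $n\notin A$, uniformly over all $x$-good vertices; this reduces the entire problem to the inequalities indexed by the singletons $|A|=1$ and by the triples $|A|=3$ with $n\in A$. I would first dispose of the situation in which $x$ can be translated by an element of $H_n$ so that $0$ becomes the most frequent element in \emph{every} multiset: there, Proposition \ref{0} immediately yields $x-v(0)\in(k-1)P_n$. Hence one may assume condition $(5)$, and Proposition \ref{A3} then handles all triples $|A|=3$ with $n\in A$ for every $v\in V_n$ at once, leaving only the singleton inequalities.

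For the singletons I would split on the value of $x_0^n$. If $x_0^n>0$, the vertex $v(0)$ remains $x$-good and Proposition \ref{x_n>0} completes the case. The remaining case $x_0^n=0$ is where $v(0)$ need no longer be $x$-good, and one is forced to subtract an edge vertex $v(g)_{j,n}$ instead; I would subdivide according to whether $x$ lies on some facet $kF_g(A)$ with $|A|=1$ and $A\neq\{n\}$. When it does not, Proposition \ref{0nof} produces a suitable $v\in V_n$; when it does, Proposition \ref{0f}---whose part a) first shows $x$ can lie on at most one such facet---supplies the vertex, the choice being dictated by which of the coordinates $x_\beta^1,x_\gamma^1,x_\beta^n,x_\gamma^n$ are nonzero.

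I expect the case $x_0^n=0$ to be the main obstacle. It is precisely the point at which the natural and uniform choice $v(0)$ fails to be $x$-good, so one must instead make a delicate combinatorial selection of an edge vertex $v(g)_{j,n}$ that simultaneously keeps $x-v$ nonnegative and preserves the single surviving inequality $S_g(x-v,A)\ge k-1$; the split into the on-facet and off-facet subcases, together with the use of the inequalities $S_g(x,\{n\})\ge k$ to locate a nonzero coordinate to remove, is what makes this selection possible. Once $v$ has been produced in every branch, the inductive hypothesis applied to $y=x-v$ closes the induction, and Lemma \ref{3} then delivers normality of $P_n$ for all $n$.
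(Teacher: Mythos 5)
Your proposal is correct and follows essentially the same route as the paper: after the symmetry reductions giving $(1)$--$(3)$ and the reduction of Lemma \ref{3}, you dispose of $k=2$ via Lemma \ref{4} and peel off a vertex in every remaining case exactly as the paper does, via Lemma \ref{6} for condition $(4)$, Proposition \ref{big} for the large sets $A$, and Propositions \ref{0}, \ref{A3}, \ref{x_n>0}, \ref{0nof}, \ref{0f} for the small ones. Your explicit framing as an induction on $k$ (with the implicit simultaneous induction on $n$ through Lemma \ref{3} to handle a coordinate of $x-v$ reaching $k-1$) is only a bookkeeping elaboration of the same argument, stated in the paper at a comparable level of informality.
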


\begin{proof}
Consider point $x\in kP_n\cap L_n$ for some positive integer $k$. If $k=2$ then $x$ decomposes due to Lemma \ref{4}. To prove normality of $P_n$ it is sufficient for $k\ge 3$ to prove that there exists a vertex $v$ of $P_n$ such that $x-v\in (k-1)P_n$. Also it is sufficient to consider only points $x$ which satisfy $(1)-(3)$. The existence of such $v$ is implied by Lemma \ref{6} and Propositions \ref{0}, \ref{x_n>0}, \ref{0nof} and \ref{0f}.
\end{proof}

\printbibliography
\end{document}